\definecolor{brickred}{HTML}{B6321C}
\definecolor{royalblue}{HTML}{0071BC}
\numberwithin{equation}{section}
\theoremstyle{definition}
\newtheorem{theorem}{Theorem}[section]
\newtheorem*{theorem*}{Theorem}
\newtheorem*{example*}{Example}
\newtheorem{lemma}[theorem]{Lemma}
\newtheorem*{lemma*}{Lemma}
\newtheorem{corollary}[theorem]{Corollary}
\newtheorem*{corollary*}{Corollary}
\newtheorem*{definition*}{Definition}
\newtheorem*{proposition*}{Proposition}
\newtheorem*{remark*}{Remark}
\begin{document}

\title[Two Conjectures on the Ungar Games]{Ungar Games on the Young-Fibonacci \\ and the Shifted Staircase Lattices}

\author[Y. Choi]{Yunseo Choi}
\address{Harvard University\\
Cambridge, MA, 02138}
\email{ychoi@college.harvard.edu}

\author[K. Gan]{Katelyn Gan}
\address{Sage Hill School\\
Newport Beach, CA, 92657}
\email{katelyngan77@gmail.com}

\begin{abstract}
In 2023, Defant and Li introduced the Ungar move, which sends an element $v$ of a finite meet-semilattice $L$ to the meet of some subset of the elements covered by $v$. More recently, Defant, Kravitz, and Williams introduced the Ungar game on $L$, in which two players take turns making Ungar moves starting from an element of $L$ until the player that cannot make a nontrivial Ungar move loses. In this note, we settle two conjectures by Defant, Kravitz, and Williams on the Ungar games on the Young-Fibonacci lattice and the lattices of the order ideals of shifted staircases.

\end{abstract}
\subjclass{}
\keywords{}
\maketitle

\section{Introduction.} In 2023, Defant and Li \cite{Defant2023} introduced the \textit{Ungar move}, which sends an element $v$ of a meet-semilattice $L$ to the meet of $\{v\} \cup T$ for some subset $T$ of the elements that $v$ covers. If $T = \emptyset$, then the Ungar move is \textit{trivial}. In 2024, Defant, Kravitz, and Williams \cite{Defant2024} introduced the \textit{Ungar game} on a finite meet-semi lattice $L$, in which Atniss and Eeta alternate making nontrivial Ungar moves starting from  an element of $L$ until the player that cannot make a nontrivial Ungar move loses.

An element $v$ of a meet-semilattice $L$ is an \textit{Atniss win} if Atniss has a winning strategy in the Ungar game on the sublattice $[\hat{0}, v]$; otherwise, $v$ is an \textit{Eeta win} (see for example, \Cref{generallattice}). Let $\mathbf{A}(L)$ and $\mathbf{E}(L)$ be the set of Atniss and Eeta wins in $L$, respectively. For a graded lattice, let $\mathbf{A}_n(L)$ and $\mathbf{E}_n(L)$ be the set of Atniss and Eeta wins of rank $n$, respectively. 

\begin{figure}[htbp]
    \begin{center}
    \scalebox{1}{
    \begin{tikzpicture}[scale=0.8]
    \draw [color=brickred, line width=1pt ] (7.5, 11.25) circle (0.25cm) node{\normalsize A} ;
    \filldraw [ color=royalblue, fill=royalblue!15, line width=1pt ] (3.75, 12) circle (0.25cm) node {\normalsize E} ;
    \filldraw [ color=royalblue, fill=royalblue!15, line width=1pt ] (6.25, 10.5) circle (0.25cm) node {\normalsize E} ;
    \draw [ color=brickred,  line width=1pt ] (5,   11.25) circle (0.25cm) node {\normalsize A} ;
    \draw [ color=brickred,  line width=1pt ] (6.25, 12) circle (0.25cm) node {\normalsize A} ;
    \draw [ color=brickred,  line width=1pt ] (5,  12.75) circle (0.25cm) node {\normalsize A} ;
    \draw [ color=brickred,  line width=1pt ] (6.25, 13.75) circle (0.25cm) node {\normalsize A} ;
    \filldraw[black, thick] (5.2, 12.88) -- (6.05,13.6) ;
    \filldraw[black, thick] (3.99, 12.09) -- (4.8,12.6);
    \filldraw[black, thick] (6.25,13.5) -- (6.25,12.25);
    \filldraw[black, thick] (6.07,10.65) -- (5.21,  11.14);
    \filldraw[black, thick] (4.78,   11.38) -- (3.93, 11.84);
    \filldraw[black, thick] (5.2,   11.41) -- (6.05, 11.85);
    \filldraw[black, thick] (6.48, 10.58) -- (7.28, 11.13);
    \filldraw[black, thick] (7.27, 11.35) -- (6.47, 11.89);
    \filldraw[black, thick] (6.49, 13.65) -- (7.41, 11.47);
    \end{tikzpicture}}
    \end{center}    
    \caption{A lattice with Atniss wins unshaded and colored \textcolor{brickred}{red} and Eeta wins shaded and colored \textcolor{royalblue}{blue}.}\label{generallattice}
\end{figure}
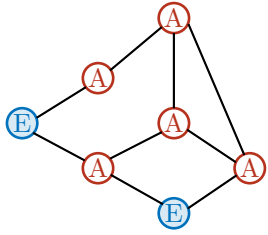

In \cite{Defant2024}, Defant, Kravitz, and Williams studied the Ungar games on the weak order on $S_n$, the intervals in Young's lattice, and the Tamari lattices. In this note, we settle two of Defant, Kravitz, and Williams' conjectures \cite{Defant2024} on the Ungar games on the Young-Fibonacci lattice and the lattices of the order ideals of shifted staircases. First, we characterize the Eeta wins of the Young-Fibonacci lattice $\mathbb{YF}$. In what follows, let $v_{i:j} = v_i v_{i+1} \cdots v_{j}$ for $v = v_{1} v_{2} \ldots v_{|v|}$.

\begin{theorem} \label{main1}
    For $r\geq 2$, an element $v \in \mathbb{YF}_r$ is an Eeta win if and only if
    \begin{itemize}
        \item $v_{1:|v|-1}=1 1 \cdots 1$ and the number of $1$s in $v$ is even, or
        \item $v_{1:|v|-1} \ne 1 1 \cdots 1$ and the number of $1$s to the left of the leftmost $2$ in $v$ is odd.
    \end{itemize}
\end{theorem}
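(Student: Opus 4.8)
The plan is to argue by strong induction on the rank $r$, after recording the covering relations of $\mathbb{YF}$: the bottom element $\hat{0}$ is the empty word, the rank of a word is the sum of its letters, and $u \lessdot v$ precisely when $u$ arises from $v$ either by deleting the leftmost $1$ of $v$, or by replacing one of the leading $2$'s of $v$ (a $2$ preceded only by $2$'s) by a $1$. Two features drive everything. First, if $v$ begins with a $1$ then its only lower cover is the word obtained by deleting that first letter, so the sole Ungar move from $v$ is forced, and $v$ is an Atniss win if and only if that shortened word is an Eeta win. Writing an arbitrary word as $v = 1^a w$ with $w$ empty or beginning with $2$ and iterating this ``flip principle'' shows that $v$ and $w$ have the same winner when $a$ is even and opposite winners when $a$ is odd. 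Since $\hat{0}$ is an Eeta win and one checks directly that the length‑one word $2$ is an Eeta win, the two base possibilities $w=\varnothing$ and $w=2$ already give the first bullet, where $a$ equals the number of $1$'s and its parity decides the outcome.

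Thus the theorem reduces to the following claim: \emph{every word $w$ beginning with $2$ and of length at least $2$ is an Atniss win.} Granting it, $v=1^a w$ for such a $w$ is an Eeta win exactly when $a$ is odd, and $a$ is the number of $1$'s to the left of the leftmost $2$; this is precisely the second bullet. To prove the claim I will, for each $w=2^b\sigma$ (with $b\ge 1$ the length of the initial run of $2$'s and $\sigma$ empty or beginning with $1$), exhibit one Ungar move landing on an Eeta win whose status is supplied by the inductive hypothesis. When $b\ge 2$ and $w\ne 22$, changing the first $2$ produces $u_1=1\,2^{b-1}\sigma$, whose forced predecessor $2^{b-1}\sigma$ again begins with $2$ and has length at least $2$, hence is an Atniss win by induction; by the flip principle $u_1$ is then an Eeta win. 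When $b=1$, so $w=2\sigma$ with $\sigma$ beginning with $1$: if $\sigma$ is an Atniss win, the move changing the $2$ lands on $1\sigma$, an Eeta win by the flip principle; if $\sigma$ is an Eeta win, I instead move to the meet of the two covers $1\sigma$ and $2\sigma'$, where $\sigma'$ is $\sigma$ with its first $1$ deleted.

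The decisive point is that this meet equals $\sigma$ itself. Indeed $\sigma\le 1\sigma$ (it is the unique cover of $1\sigma$) and $\sigma\le 2\sigma'$ (it is obtained from $2\sigma'$ by changing the leading $2$ to a $1$), while every common lower bound must lie weakly below $\sigma$, since the down‑set of any word beginning with $1$ is that word together with the down‑set of its deletion, and $1\sigma\not\le 2\sigma'$. Hence the move reaches $\sigma$, an Eeta win by induction. Finally $w=22$ is settled by the companion computation $12\wedge 21 = 2$, so moving to the pair $\{12,21\}$ reaches the Eeta win $2$.

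The step I expect to require the most care is exactly this meet computation: a priori the greatest lower bound of two covers could sit far below them, so proving $1\sigma\wedge 2\sigma'=\sigma$ and $12\wedge 21=2$ rigorously needs a workable description of the order $\le$ on $\mathbb{YF}$. I would therefore isolate, as a preliminary lemma, the down‑set formula for words beginning with $1$ stated above, from which all the meets used here follow immediately. With these meets in hand the induction closes: words beginning with $1$ are handled by the forced‑move flip, words beginning with $2$ of length at least $2$ are Atniss wins by the explicit strategy, and the base cases $\hat{0}$ and $2$ are Eeta wins, matching the two bullets of the statement.
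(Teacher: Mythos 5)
Your proposal is correct and follows essentially the same route as the paper: induction on rank, with the forced move $1w \mapsto w$ for words beginning with $1$ (your ``flip principle'' is just this observation iterated), and, for words beginning with $2$, Ungar moves reaching $1v_{2:|v|}$ (a cover) and $v_{2:|v|}$ (the meet of two covers), one of which is an Eeta win. The only real difference is that you rigorously justify the meet identities ($1\sigma \wedge 2\sigma' = \sigma$ and $12 \wedge 21 = 2$) via the down-set lemma for words beginning with $1$, whereas the paper asserts these identities without proof, so your added lemma is a small strengthening rather than a new approach.
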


Defant, Kravitz, and Williams' conjecture \cite[Conjecture 6.1]{Defant2024} follows. 

\begin{corollary} \label{corollary1}
    For $r \geq 2$, it holds that $|\textbf{E}_r(\mathbb{YF})|=f_{r-2}+(-1)^r$.
\end{corollary}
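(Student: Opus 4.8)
The plan is to count the two families of Eeta wins described in \Cref{main1} separately and then combine the counts using a standard Fibonacci partial‑sum identity. Throughout I use the normalization $f_0 = f_1 = 1$, so that $f_n = |\mathbb{YF}_n|$ is the number of words in $\{1,2\}$ of weight $n$.

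First I would handle the first bullet of \Cref{main1}, namely the words $v$ with $v_{1:|v|-1} = 11\cdots 1$. Such a $v$ is determined by its last letter: either $v = 1^r$, which has $r$ ones, or $v = 1^{r-2}2$, which has $r-2$ ones. In both cases the number of ones is even precisely when $r$ is even, so this family contributes exactly $1 + (-1)^r$ Eeta wins.

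Next, for the second bullet, I would stratify the words $v$ with $v_{1:|v|-1} \ne 11\cdots 1$ by the position of their leftmost $2$. If $v$ has $m$ ones to the left of its leftmost $2$, then $v = 1^m\,2\,w$ for a word $w$ of weight $r-m-2$; since the leftmost $2$ must lie strictly before the last letter of $v$, the suffix $w$ is nonempty, which forces $m \le r-3$. The number of admissible suffixes $w$ of weight $r-m-2 \ge 1$ is $f_{r-m-2}$, and the Eeta‑win condition requires $m$ to be odd, so this family contributes $\sum_{m\ \mathrm{odd},\ 0 \le m \le r-3} f_{r-2-m}$. Finally I would evaluate this sum: writing $k = r-2-m$, it runs over $k \in \{1,\dots,r-3\}$ of parity opposite to that of $r$, so it is a sum of alternate Fibonacci numbers. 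Applying $f_1 + f_3 + \cdots + f_{2j-1} = f_{2j}-1$ when $r$ is even and $f_2 + f_4 + \cdots + f_{2j} = f_{2j+1}-1$ when $r$ is odd, this sum equals $f_{r-2}-1$ in both parities. Adding the two contributions gives $(1+(-1)^r) + (f_{r-2}-1) = f_{r-2} + (-1)^r$, as claimed.

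The combinatorial content is entirely carried by \Cref{main1}; the only real subtlety here is bookkeeping. In particular, I would take care to identify the correct range and parity of the summation index, to check the boundary cases $r \in \{2,3\}$ where the second family is empty (so the empty sum correctly matches $f_{r-2}-1 = f_0-1 = 0$), and to confirm that the degenerate words $1^{r-2}2$ and $v = 2$ (when $r=2$) are sorted onto the intended side of the dichotomy in \Cref{main1}.
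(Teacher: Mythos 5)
Your proof is correct, but it takes a genuinely different route from the paper's. You count the Eeta wins of rank $r$ directly from the characterization in \Cref{main1}: the ``all-ones prefix'' family $\{1^r,\, 1^{r-2}2\}$ contributes $1+(-1)^r$, and the remaining words, stratified by the number $m$ of $1$s before the leftmost $2$ (with $m$ odd and a nonempty suffix of weight $r-m-2$, forcing $m \le r-3$), contribute $\sum_{m \text{ odd}} f_{r-2-m} = f_{r-2}-1$ via the alternating partial-sum identities for Fibonacci numbers. Your bookkeeping checks out in both parity cases, as do the boundary cases $r \in \{2,3\}$, including the placement of $v=2$ at $r=2$ in the first family (its prefix $v_{1:0}$ is empty and its count of $1$s is $0$, hence even, matching the figure where $2$ is an Eeta win). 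The paper instead argues by induction on $r$: it observes that for $r \ge 3$ both bullets of \Cref{main1} force $v_1 = 1$, and that $1u$ is an Eeta win exactly when $u \in \mathbf{A}_{r-1}(\mathbb{YF})$ (the Ungar move from a word starting with $1$ is forced), yielding the recurrence $|\mathbf{E}_r(\mathbb{YF})| = f_{r-1} - |\mathbf{E}_{r-1}(\mathbb{YF})|$, from which the closed form follows in one line. The paper's argument is shorter and needs no summation identities, exploiting the complementation between $\mathbf{E}_r(\mathbb{YF})$ and $\{1u : u \in \mathbf{A}_{r-1}(\mathbb{YF})\}$; yours is non-inductive (given \Cref{main1} and the standard Fibonacci identities) and makes explicit exactly which words are counted in each parity class, at the cost of the two-case partial-sum computation.
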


Next, we characterize the Eeta wins in the lattices $J(\mathrm{SS}_n)$ of the order ideals of shifted staircases, which corrects and settles \cite[Conjecture 6.2]{Defant2024}. 

\begin{theorem} \label{main2}
    An order ideal $v \in J(\mathrm{SS}_n)$ with binary representation $s$ is an Eeta win if and only if
    \begin{itemize}
        \item $s_{|s|}=0$, and 
        \item there are no odd-length sequences of $1$s followed by an odd-length sequence of $0$s in $s_{1:|s|-1}$.
    \end{itemize}
\end{theorem}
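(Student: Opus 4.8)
The plan is to analyze the Ungar game on order ideals of shifted staircases via their binary (profile) encoding and to reduce the game to a combinatorial invariant of the string $s$. I need to understand how Ungar moves act on a shifted-staircase order ideal written in binary. First I would recall that an order ideal of the shifted staircase poset $\mathrm{SS}_n$ is determined by a lattice-path boundary, which is recorded as a binary word $s$ (with $0$s and $1$s encoding the up/right steps of the profile). The elements covered by $v$ in $J(\mathrm{SS}_n)$ correspond to cells that are ``removable corners'' of the ideal, i.e.\ to occurrences of a descent pattern (a $1$ immediately followed by a $0$, reading $s$ in the appropriate direction) where swapping $10 \to 01$ removes one cell. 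An Ungar move takes the meet of any chosen set of these covers; in the binary picture I expect the meet to correspond to simultaneously performing a chosen subset of the available corner-removals, so a single Ungar move can flip any nonempty subset of the ``$10$'' descent positions at once.

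With the move structure pinned down, the next step is to set up a Sprague--Grundy / win-loss recursion on binary strings. I would define $v$ to be an Eeta win exactly when every Ungar move from $v$ lands on an Atniss win, and an Atniss win when some move lands on an Eeta win; the losing (terminal) positions are the ideals with no removable corner, i.e.\ the strings with no $10$ descent, which are the sorted strings $0\cdots 0 1\cdots 1$. Because a move may flip \emph{any} subset of descents, the game has a flavor similar to a Nim-like or ``turning-coins'' game, and I would look for a parity invariant on $s$ that is preserved or predictably toggled by moves. The two conditions in the statement---that the last bit be $0$, and that $s_{1:|s|-1}$ contain no odd-length block of $1$s immediately followed by an odd-length block of $0$s---strongly suggest tracking the parities of the maximal runs of $0$s and $1$s. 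The strategy would be to show by induction on the rank (number of cells) that a position satisfying both bullet conditions is an Eeta win, and that violating either bullet is an Atniss win, by exhibiting an explicit move that flips the status.

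The key technical lemma I would aim to prove is a \emph{move characterization}: from a string violating one of the conditions, there is a single subset of descent positions whose simultaneous flipping produces a string satisfying both conditions (so Atniss can move to an Eeta win), whereas from a string satisfying both conditions, \emph{every} legal nonempty move produces a string violating at least one condition (so all of Eeta's responses keep the game in the winning set for the opponent-to-move, forcing Atniss into Atniss-win positions). Concretely, I would classify how flipping a chosen set of $10$ descents changes the last bit and the run-parity pattern of the prefix $s_{1:|s|-1}$: flipping the final descent toggles $s_{|s|}$, and flipping interior descents merges or splits runs and shifts their parities. I expect the heart of the argument to be a careful case analysis verifying the closure-under-all-moves half (every move out of the Eeta set lands in the Atniss set), since this requires ruling out \emph{all} subsets simultaneously rather than producing one convenient move.

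The main obstacle I anticipate is exactly this closure direction together with getting the combinatorial dictionary right at the boundary. Several subtle points need care: the precise translation between removable corners of $\mathrm{SS}_n$ and descent positions in $s$ (including the role of the reserved last bit and any forced boundary bits from the shifted---as opposed to straight---shape), and the fact that taking a \emph{meet} of several covers need not equal ``remove each corner independently'' unless the corners are non-interacting, so I would have to verify that the meet of a set of covers corresponds cleanly to the string obtained by flipping the corresponding descents, handling adjacent or nested corners correctly. Once the move dictionary is verified and the run-parity invariant is shown to detect exactly the Eeta set, the induction closes; the corrected statement (relative to the original conjecture) should emerge from pinning down these boundary and interaction details correctly.
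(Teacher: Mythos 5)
Your plan follows the same route as the paper---the binary dictionary, the identification of a meet of covers with simultaneous flips (this is exactly \Cref{lemma1}, and it does hold cleanly because flippable positions are pairwise non-adjacent), and an induction on rank driven by a two-sided move characterization (an existence half, which is \Cref{lemma2}, and a closure half). However, there is a concrete error in your move dictionary that would derail the recursion. The covers of $v$ with binary representation $s$ are indexed by $F(s)$, which contains not only the interior occurrences $s_is_{i+1}=10$ but also the position $i=|s|$ whenever $s_{|s|}=1$: the cell of the ideal lying on the staircase diagonal is a removable corner of the \emph{shifted} shape, and removing it flips a trailing $1$ to $0$ with no compensating $1$. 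Consequently the unique terminal position is the all-zeros string, not the family of sorted strings $0\cdots 01\cdots 1$ that you name as terminal. For instance, $0\cdots 01$ covers $\hat{0}$ and is an Atniss win, consistent with the first bullet of \Cref{main2}; under your move set it would be a dead end and hence an Eeta win. The last-bit condition in the theorem flows entirely through this extra move: in the paper's closure step, any move $G(s,T)$ with $|s|-1\in T$ produces last bit $1$, and such positions are Atniss wins precisely because the trailing $1$ can be flipped back to $0$; in the Atniss-win case where $s_{1:|s|-1}$ is good and $s_{|s|}=1$, the winning move \emph{is} this diagonal flip to $s_{1:|s|-1}0$. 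You do flag the ``reserved last bit'' of the shifted shape as a point of uncertainty, to your credit, but the plan as written starts from the wrong terminal set, so the induction would mislabel positions from rank $1$ onward.

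Beyond the dictionary, the two halves you correctly identify as the heart are left unexecuted, and neither is routine in the paper. The closure half is not a brute-force run-parity case analysis over all subsets: the paper localizes it by looking only at $m=\max(T)$ and showing that the resulting prefix $t_{1:|s|-1}$ is bad via the function $H$, distinguishing according to the parity of the $0$-block following the flip (yielding $H(t_{1:|s|-1})=m+1$ or $m-1$). The existence half is a genuinely recursive construction: \Cref{lemma2} builds $I(s)\subseteq F(s)$ with $G(s,I(s))$ good by a five-case recursion on the suffix of $s$, maintaining the boundary invariants $|s|\notin I(s)$ and $|s|-1\notin I(s)$ when $s_{|s|-1:|s|}=00$, and the bad-prefix case of the main induction then combines this with the diagonal flip (taking the meet with $s_{1:|s|-1}0$) to force the last bit to $0$ as well. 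A corrected version of your plan would need both of these ingredients essentially as the paper has them.
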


The sequence $|\mathbf{E}(J(\mathrm{SS}_n))|$ follows directly. 

\begin{corollary}\label{corollary2}
    The sequence $|\mathbf{E}(J(\mathrm{SS}_n))|$ is OEIS A061279 \cite{OEIS}.
\end{corollary}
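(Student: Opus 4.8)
The plan is to use the characterization in \Cref{main2} to turn the computation of $|\mathbf{E}(J(\mathrm{SS}_n))|$ into a purely enumerative question about binary strings, and then to identify the resulting sequence with A061279 by matching generating functions (equivalently, a linear recurrence together with initial conditions). First I would recall from the construction of $J(\mathrm{SS}_n)$ exactly which binary strings $s$ arise as the representations of order ideals and what their common length $N = N(n)$ is. By \Cref{main2}, the Eeta wins are then precisely those valid $s$ with $s_{|s|}=0$ whose prefix $s_{1:|s|-1}$ contains no maximal block of $1$s of odd length immediately followed by a maximal block of $0$s of odd length, so $|\mathbf{E}(J(\mathrm{SS}_n))|$ is just the number of such admissible strings.

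Next I would count these strings with a transfer-matrix (finite-automaton) argument. Writing each string as its alternating sequence of maximal runs, I would track states recording the symbol of the current run and the parity of its length so far, and forbid exactly the transition from an odd-length $1$-run into an odd-length $0$-run; a pending-state is needed because the parity of a run is only settled once the run closes. Assigning weight $x^{\ell}$ to a run of length $\ell$ yields a rational generating function $F(x)$ for admissible prefixes, and appending the final $0$ (treating the prefix's trailing run as a boundary case at position $N-1$) gives a closed form for the full counting series $\sum_{n} |\mathbf{E}(J(\mathrm{SS}_n))|\, x^{n}$. A bijective reformulation of the run condition as a composition-avoidance problem could also be pursued, but the transfer matrix is the most mechanical route to a formula.

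Finally I would compare this generating function with the one recorded for A061279, or equivalently extract a linear recurrence and verify it against enough initial terms. I expect the main obstacle to be twofold: correctly pinning down the set of valid representations and the length $N(n)$, so that any region or ballot-type validity constraints intrinsic to $J(\mathrm{SS}_n)$ are properly accounted for; and handling the parity boundary effect between positions $N-1$ and $N$, since the prefix condition excludes the last symbol while the run containing that symbol may straddle the boundary. Because the original \cite[Conjecture 6.2]{Defant2024} was stated incorrectly, this boundary bookkeeping is exactly where error is likely to creep in and where care must be taken; once the generating function is in hand, the identification with A061279 reduces to a direct comparison.
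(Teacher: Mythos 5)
Your plan is sound, and in substance it is the only way to actually verify what the paper dispatches in a single line: the paper's entire proof of \Cref{corollary2} is the assertion that it immediately follows from \Cref{main2}, leaving the enumeration and the match with the OEIS entry implicit. Your transfer-matrix proposal is thus a fleshed-out version of the omitted computation rather than a divergence from it. Two of the obstacles you flag dissolve upon inspection of the paper's setup. First, there is no ballot-type validity constraint: the paper's bijection sends the order ideals of $\mathrm{SS}_n$ to \emph{all} $2^n$ binary strings of length $n$, so $N(n)=n$ and every string is admissible. Second, the boundary bookkeeping is trivial here, because \Cref{main2} fixes $s_{|s|}=0$ and imposes goodness on $s_{1:|s|-1}$ \emph{as a standalone string} (the trailing $0$-run of the prefix counts as a completed block); hence $|\mathbf{E}(J(\mathrm{SS}_n))|$ is exactly the number of good strings of length $n-1$, with no straddling-run subtlety and no interaction with the appended final $0$. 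Your automaton then needs only five states (in an unconstrained $0$-run; in a constrained $0$-run, i.e.\ one preceded by an odd $1$-run, with odd or even parity so far; in a $1$-run with odd or even parity), with the single forbidden event being the closure of a constrained $0$-run at odd parity. Carrying this out gives the generating function $(1+x)^2/(1-2x^2-2x^3)$ for good strings, equivalently the recurrence $a(m)=2\bigl(a(m-2)+a(m-3)\bigr)$, consistent with the counts $1,2,3,6,10,18,32,\ldots$ (note the $6$ shaded Eeta wins in the paper's figure for $J(\mathrm{SS}_4)$). The one genuine caution concerns your last step: verifying a derived recurrence ``against enough initial terms'' is not a proof, since matching finitely many terms of an OEIS entry never certifies identity; you must match your rational generating function or recurrence against the \emph{defining} description of A061279 recorded in \cite{OEIS} (or a proven formula attached to it). Provided you close the argument that way, your route is complete and is strictly more careful than the paper's one-line dismissal.
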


The rest of this note is as follows. \Cref{prelims} is preliminaries. In Sections \ref{proof1} and \ref{proof2}, we prove Theorems \ref{main1} and \ref{main2}, respectively.

\section{Preliminaries.} \label{prelims}
A \textit{lattice} is a poset in which any two elements have a least upper bound (called their \textit{join} and notated by $\lor$) and a greatest lower bound (called their \textit{meet} and notated by $\land$). A \textit{meet-semilattice} is a poset in which every pair of elements has a meet. For $\{u, v\} \subseteq L$ such that $u \leq v$, the interval $[u, v]$ is all $w \in L$ such that $u \leq w \leq v$. If $|[u,v]|=2$, then say that $v$ \textit{covers} $u$ and write that $u \lessdot v$. The \textit{order ideal} of $v \in L$ is all $u \in L$ such that $u \leq v$. Let $\hat{0}$ be the minimal element of $L$. A \textit{graded lattice} $L$ has a rank function $\rho$ such that $\rho(\hat{0}) = 0$ and $\rho(u)+1 = \rho(v)$ if $u \lessdot v$. Let $\mathrm{Ung}(v)$ be the set of elements of $L$ that can be obtained by an Ungar move on $v$.

\section{Proof of \Cref{main1}.} \label{proof1}

The lattice $\mathbb{YF}$ was introduced by Fomin \cite{Fomin} and Stanley \cite{Stanley}. The elements of $\mathbb{YF}$ are the words of the alphabets $\{1,2\}$ (see for example, \Cref{fiblat} for $\mathbb{YF}$ up to rank $5$). The rank of $v \in \mathbb{YF}$ is $\rho(v) = \sum^{|v|}_{i=1} v_i$. Let $\mathbb{YF}_r$ be all elements in $\mathbb{YF}$ of rank $r$. Let $u \in \mathbb{YF}$. The lattice $\mathbb{YF}$ is such that $u \lessdot v$ if and only if

\begin{itemize}
    \item $v = u_{1:i}1u_{i+1:|u|}$ such that $u_{1:i}$ contains no $1$s, or
    \item $v = u_{1:i-1}2u_{i+1:|u|}$ such that $u_i$ is the leftmost $1$ in $u$,
\end{itemize}
where $u_{i:j} = \emptyset$ if $i > j$.

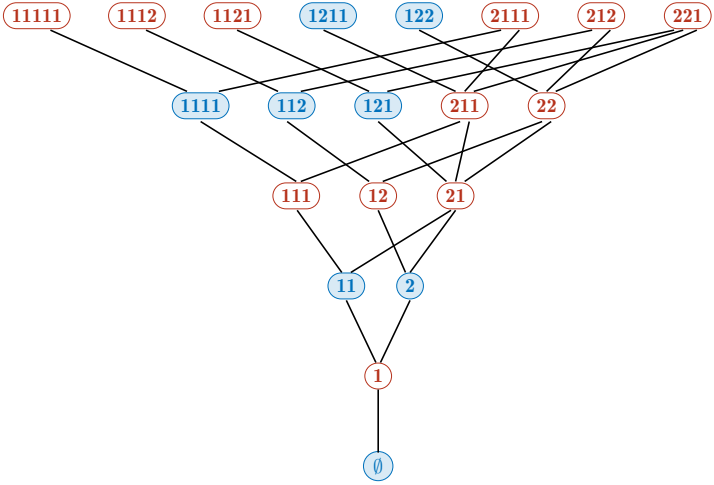
\begin{figure}[htbp]
    \centering    
        \begin{center}
    \scalebox{0.75}{
    \begin{tikzpicture}[scale=.8]

    \node[rounded rectangle, draw=royalblue, text=royalblue, fill=royalblue!15] at (4.5,0) {\textbf{$\emptyset$}};
    \node[rounded rectangle, draw=brickred, text=brickred] at (4.5,2) {\textbf{1}};
    \node[rounded rectangle, draw=royalblue, text=royalblue, fill=royalblue!15] at (3.8,4) {\textbf{11}};
    \node[rounded rectangle, draw=royalblue, text=royalblue, fill=royalblue!15] at (5.2,4) {\textbf{2}};
    \node[rounded rectangle, draw=brickred, text=brickred] at (2.8-0.1,6) {\textbf{111}};
    \node[rounded rectangle, draw=brickred, text=brickred] at (4.5,6) {\textbf{12}};
    \node[rounded rectangle, draw=brickred, text=brickred] at (6.2,6) {\textbf{21}};
    \node[rounded rectangle, draw=royalblue, text=royalblue, fill=royalblue!15] at (0.6,8) {\textbf{1111}};
    \node[rounded rectangle, draw=royalblue, text=royalblue, fill=royalblue!15] at (2.8-0.2,8) {\textbf{112}};
    \node[rounded rectangle, draw=royalblue, text=royalblue, fill=royalblue!15] at (4.5,8) {\textbf{121}};
    \node[rounded rectangle, draw=brickred, text=brickred] at (6.4,8) {\textbf{211}};
    \node[rounded rectangle, draw=brickred, text=brickred] at (8.2,8) {\textbf{22}};
    \node[rounded rectangle, draw=brickred, text=brickred] at (-3,10) {\textbf{11111}};
    \node[rounded rectangle, draw=brickred, text=brickred] at (-0.8,10) {\textbf{1112}};
    \node[rounded rectangle, draw=brickred, text=brickred] at (1.3,10) {\textbf{1121}};
    \node[rounded rectangle, draw=royalblue, text=royalblue, fill=royalblue!15] at (3.4,10) {\textbf{1211}};
    \node[rounded rectangle, draw=royalblue, text=royalblue, fill=royalblue!15] at (5.4,10) {\textbf{122}};
    \node[rounded rectangle, draw=brickred, text=brickred] at (7.4,10) {\textbf{2111}};
    \node[rounded rectangle, draw=brickred, text=brickred] at (9.4,10) {\textbf{212}};
    \node[rounded rectangle, draw=brickred, text=brickred] at (11.3,10) {\textbf{221}};
    
    \draw[black, thick] (4.5, 0.3) -- (4.5, 1.8-0.1);
    
    \draw[black, thick] (4.45, 2.2+0.1) -- (3.8, 3.68);
    \draw[black, thick] (4.55, 2.2+0.1) -- (5.2, 3.68);
    
    \draw[black, thick] (3.7, 4.3) -- (2.72,5.68);
    \draw[black, thick] (3.9,4.3) -- (6.1,5.68);
    \draw[black, thick] (5.1,4.3) -- (4.5,5.68);
    \draw[black, thick] (5.2,4.3) -- (6.2,5.68);
    
    \draw[black, thick] (2.7,6.32) -- (0.6,7.65);
    \draw[black, thick] (2.8,6.32) -- (6.3,7.65);
    \draw[black, thick] (4.3,6.32) -- (2.5,7.65);
    \draw[black, thick] (4.6,6.32) -- (8.2-0.1,7.65);
    \draw[black, thick] (6,6.32) -- (4.5,7.65);
    \draw[black, thick] (6.2,6.32) -- (6.5,7.65);
    \draw[black, thick] (6.4,6.32) -- (8.3,7.65);
    
    \draw[black, thick] (0.3,8.32) -- (-2.7,9.68);
    \draw[black, thick] (1.0,8.32) -- (7.2,9.68);
    \draw[black, thick] (2.3,8.32) -- (-0.6,9.68);
    \draw[black, thick] (2.8,8.32) -- (9.2,9.68);
    \draw[black, thick] (4.3,8.32) -- (1.4,9.68);
    \draw[black, thick] (4.7,8.32) -- (11,9.68);
    \draw[black, thick] (6.2,8.32) -- (3.3,9.68);
    \draw[black, thick] (6.4,8.32) -- (7.6,9.68);
    \draw[black, thick] (6.6,8.32) -- (11.2,9.68);
    \draw[black, thick] (8,8.32) -- (5.4,9.68);
    \draw[black, thick] (8.2,8.32) -- (9.6,9.68);
    \draw[black, thick] (8.4,8.32) -- (11.5,9.68);
    \end{tikzpicture}}
    \end{center} 
    \caption{The lattice $\mathbb{YF}$ up to rank $5$ with Atniss wins unshaded and colored \textcolor{brickred}{red} and Eeta wins shaded and colored \textcolor{royalblue}{blue}.}\label{fiblat}
\end{figure}

Now, we prove \Cref{main1}.

\begin{proof}[Proof of \Cref{main1}]
    We induct on the rank $r$. The statement holds when $r \in \{2, 3\}$. Now, suppose that $r \geq 4$ and that the statement holds for $r-1$ and $r-2$. First, suppose that $v \in \mathbb{YF}_{r}$ and that $v_{1} = 1$. Then $\mathrm{Ung}(v) = \{v_{2:|v|}\}$, because the only element that $v$ covers is $v_{2:|v|}$. Thus, $v \in \mathbf{E}_{r}(\mathbb{YF})$ if and only if $v_{2:|v|} \in \mathbf{A}_{r-1}(\mathbb{YF})$. Therefore, by the induction hypothesis, $v \in \mathbf{E}_{r}(\mathbb{YF})$ if and only if $v_{1:|v|-1}=1 1 \cdots 1$ and the number of $1$s in $v$ is even, or $v_{1:|v|-1} \ne 1 1 \cdots 1$ and the number of $1$s to the left of the leftmost $2$ in $v$ is odd.

    Next, suppose that $v \in \mathbb{YF}_{n}$ and that $v_{1} = 2$. Then $\{1v_{2:|v|}\} \subseteq \mathrm{Ung}(x)$, because $1 v_{2:|v|} \lessdot v$. Next, $\{v_{2:|v|}\} \subseteq \mathrm{Ung}(x)$, because if $v_2=1$, then $v_{2:|v|} = v_1v_{3:|v|} \wedge 1v_{2:|v|}$ and if $v_2=2$, then $v_{2:|v|} = v_1 1 v_{3:|v|} \wedge 1v_{2:|v|}$. By the induction hypothesis, either $v_{2:|v|}$ or $1v_{2:|v|}$ is an Eeta win. Thus, $v \in \mathbf{A}_{r}(\mathbb{YF})$. 
\end{proof}

\Cref{corollary1} now follows.

\begin{proof} [Proof of \Cref{corollary1}]We induct on the rank $r$. The statement holds for $r =2$. Now, suppose that $r\geq 3$ and that the statement holds for $r-1$. By \Cref{main1}, $v \in \mathbf{E}_{r}(\mathbb{YF})$ if and only if $v_1=1$ and $v_{2:|v|} \in \mathbf{A}_{r-1}(\mathbb{YF})$. Thus, $|\mathbf{E}_{r}(\mathbb{YF})| = f_{r-1} - |\mathbf{E}_{r-1}(\mathbb{YF})| = f_{r-2}+(-1)^{r}$ by the induction hypothesis.
\end{proof} 

\section{Proof of \Cref{main2}.} \label{proof2}

The $n^{\mathrm{th}}$ \textit{shifted staircase} $\mathrm{SS}_n$ consists of all lattice points $(i,j)$ such that $1\leq i \leq j \leq n$ (see for example, \Cref{ss5} for $\mathrm{SS}_{5}$). A natural bijection between the order ideals of $\mathrm{SS}_{n}$ and the length $n$ binary strings takes the path lying directly above an order ideal of $\mathrm{SS}_{n}$ and sends the up steps to $1$ and the down steps to $0$. The \textit{binary representation} of an order ideal of $\mathrm{SS}_n$ is the binary string that the order ideal corresponds to under the natural bijection. For a path in $\mathrm{SS}_{n}$ with binary representation $s$ such that $s_i=1$, say that $(\sum^{i}_{j=1}s_j, |s| - i + \sum^{i}_{j=1}s_j)$ is \textit{directly below} the $i^{\mathrm{th}}$ step of the path. For example, $(2,4)$ is directly below the third step of the path in \Cref{ss5}. 

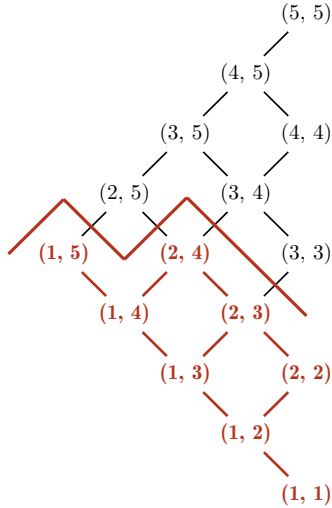
\begin{figure}[h]
    \centering
    \scalebox{0.8}{
    \begin{tikzpicture}[scale=1, baseline=20.5pt]
    \coordinate (A) at (0,0);
        \draw [shift={(A)}, color={brickred} , line width=1pt ] (4.5,0) node {\normalsize \textbf{(1, 1)}};
        \draw [shift={(A)}, color={brickred} , line width=1pt ] (4.5,2) node {\normalsize \textbf{(2, 2)}};
        \draw [shift={(A)}, color={black} , line width=1pt ] (4.5,4) node {\normalsize (3, 3)};
        \draw [shift={(A)}, color={black} , line width=1pt ] (4.5,6) node {\normalsize (4, 4)};
        \draw [shift={(A)}, color={black} , line width=1pt ] (4.5,8) node {\normalsize (5, 5)};
        \draw [shift={(A)}, color={brickred} , line width=1pt ] (3.5,1) node {\normalsize \textbf{(1, 2)}};
        \draw [shift={(A)}, color={brickred} , line width=1pt ] (3.5,3) node {\normalsize \textbf{(2, 3)}};
        \draw [shift={(A)}, color={black} , line width=1pt ] (3.5,5) node {\normalsize (3, 4)};
        \draw [shift={(A)}, color={black} , line width=1pt ] (3.5,7) node {\normalsize (4, 5)};
        \draw [shift={(A)}, color={brickred} , line width=1pt ] (2.5,2) node {\normalsize \textbf{(1, 3)}};
        \draw [shift={(A)}, color={brickred} , line width=1pt ] (2.5,4) node {\normalsize \textbf{(2, 4)}};
        \draw [shift={(A)}, color={black} , line width=1pt ] (2.5,6) node {\normalsize (3, 5)};    
        \draw [shift={(A)}, color={brickred} , line width=1pt ] (1.5,3) node {\normalsize \textbf{(1, 4)}};
        \draw [shift={(A)}, color={black} , line width=1pt ] (1.5,5) node {\normalsize (2, 5)};    
        \draw [shift={(A)}, color={brickred} , line width=1pt ] (0.5,4) node {\normalsize \textbf{(1, 5)}};
        
        \draw [shift={(A)}, brickred, line width=1.25 pt] (4.2, 0.3) -- (3.8, 0.7);
        \draw [shift={(A)}, brickred, line width=1.25 pt] (4.2, 2.3) -- (3.8, 2.7);
        \draw [shift={(A)}, black, thick] (4.2, 4.3) -- (3.8, 4.7);
        \draw [shift={(A)}, black, thick] (4.2, 6.3) -- (3.8, 6.7);
        \draw [shift={(A)}, brickred, line width=1.25 pt] (3.8, 1.3) -- (4.2, 1.7);
        \draw [shift={(A)}, black, thick] (3.8, 3.3) -- (4.2, 3.7);
        \draw [shift={(A)}, black, thick] (3.8, 5.3) -- (4.2, 5.7);
        \draw [shift={(A)}, black, thick] (3.8, 7.3) -- (4.2, 7.7);
        \draw [shift={(A)}, brickred, line width=1.25 pt] (3.2, 1.3) -- (2.8, 1.7);
        \draw [shift={(A)}, brickred, line width=1.25 pt] (3.2, 3.3) -- (2.8, 3.7);
        \draw [shift={(A)}, black, thick] (3.2, 5.3) -- (2.8, 5.7);
        \draw [shift={(A)}, brickred, line width=1.25 pt] (2.2, 2.3) -- (1.8, 2.7);
        \draw [shift={(A)}, black, thick] (2.2, 4.3) -- (1.8, 4.7);
        \draw [shift={(A)}, brickred, line width=1.25 pt] (1.2, 3.3) -- (0.8, 3.7);    
        \draw [shift={(A)}, brickred, line width=1.25 pt] (2.8, 2.3) -- (3.2, 2.7);
        \draw [shift={(A)}, black, thick] (2.8, 4.3) -- (3.2, 4.7);
        \draw [shift={(A)}, black, thick] (2.8, 6.3) -- (3.2, 6.7);    
        \draw [shift={(A)}, brickred, line width=1.25 pt] (1.8, 3.3) -- (2.2, 3.7);
        \draw [shift={(A)}, black, thick] (1.8, 5.3) -- (2.2, 5.7);    
        \draw [shift={(A)}, black, thick] (0.8, 4.3) -- (1.2, 4.7);    
        \draw [shift={(A)}, brickred, line width=1.5pt] (-0.41, 4) -- (0.5, 4.91); 
        \draw [shift={(A)}, brickred, line width=1.5pt] (0.5, 4.91) -- (1.5, 3.91);
        \draw [shift={(A)}, brickred, line width=1.5pt] (1.5, 3.91) -- (2.53, 4.94);
        \draw [shift={(A)}, brickred, line width=1.5pt] (2.53, 4.94) -- (4.50, 2.97);
        \end{tikzpicture}
        }
        \caption{The path above the order ideal of $SS_5$ with binary representation $10100$.} \label{ss5}
\end{figure}

For a binary string $s$, let $F(s)$ be the set of $i$ that satisfies
\begin{itemize}
    \item $s_i s_{i+1} = 10$ if $i\leq |s|-1$, or
    \item $s_i=1$ if $i=|s|$.
\end{itemize} 
For example, $F(110101)=\{2,4,6\}$. For $A \subseteq F(s)$, let $G(s, A)$ be obtained from $s$ by replacing $s_i$ with $0$ and $s_{i+1}$ (if exists) with $1$ for all $i \in A$. For example, $G(110101, \{2,6\})=101100$. If $|A| =1$, then we omit the brackets around $A$.

The elements of $J(\mathrm{SS}_{n})$ are the order ideals of $\mathrm{SS}_n$. The rank of $v \in J(\mathrm{SS}_n)$ with binary representation $s$ is $\rho(s) = 1 \cdot s_{|s|} + 2 \cdot s_{|s|-1} +\cdots +|s| \cdot s_1$. For example, the rank of an order ideal of $\mathrm{SS}_n$ with binary representation $1010$ is $6$. Let $v \in J(\mathrm{SS}_n)$ have binary representation $t$. The lattice $J(\mathrm{SS}_n)$ (see for example, \Cref{J(ss4)} for $J(\mathrm{SS}_4)$) is defined such that $u \lessdot v$ if and only if $G(t, i) = s$ for some $i \in F(t)$.  

Next, a \textit{$0$-block} (respectively, \textit{$1$-block}) in a string is a maximal substring of $0$'s (respectively, $1$'s). For example, in $110001$, there are two $1$-blocks and one $0$-block. Let $H(s)$ be the largest $i \in F(s)$ such that $s_i$ is the rightmost $1$ of an odd-length $1$-block that is followed by an odd-length $0$-block in $s$, provided such an $s_i$ exists. If such $s_i$ does not exist, then let $H(s) = \infty$, and call $s$ \textit{good}. Otherwise, $s$ is \textit{bad}. For example, $H(11010) = 4$, and $11010$ is bad. However, $H(110100) = \infty$, and $110100$ is good.

\begin{figure}
    \centering
    \scalebox{0.65}{
        \begin{tikzpicture}[scale=1]
            \node[rounded rectangle, draw=royalblue, text=royalblue, fill=royalblue!15] at (0,0) {0000};
            \node[rounded rectangle, draw=brickred, text=brickred] at (0,1) {0001};
            \node[rounded rectangle, draw=royalblue, text=royalblue, fill=royalblue!15] at (0,2) {0010};
            \node[rounded rectangle, draw=brickred, text=brickred] at (-2,3) {0100};
            \node[rounded rectangle, draw=brickred, text=brickred] at (2,3) {0011};
            \node[rounded rectangle, draw=brickred, text=brickred] at (0,4) {0101};
            \node[rounded rectangle, draw=royalblue, text=royalblue, fill=royalblue!15] at (-4,4) {1000};
            \node[rounded rectangle, draw=brickred, text=brickred] at (-2,5) {1001};
            \node[rounded rectangle, draw=royalblue, text=royalblue, fill=royalblue!15] at (2,5) {0110};
            \node[rounded rectangle, draw=brickred, text=brickred] at (0,6) {1010};
            \node[rounded rectangle, draw=brickred, text=brickred] at (4,6) {0111};
            \node[rounded rectangle, draw=brickred, text=brickred] at (2,7) {1011};
            \node[rounded rectangle, draw=royalblue, text=royalblue, fill=royalblue!15] at (-2,7) {1100};
            \node[rounded rectangle, draw=brickred, text=brickred] at (0,8) {1101};
            \node[rounded rectangle, draw=royalblue, text=royalblue, fill=royalblue!15] at (0,9) {1110};
            \node[rounded rectangle, draw=brickred, text=brickred] at (0,10) {1111};
            
            \draw[black, thick] (0, 0.25) -- (0, 0.75);
            \draw[black, thick] (0, 1.25) -- (0, 1.75);
            \draw[black, thick] (-0.4, 2.25) -- (-1.6, 2.75);
            \draw[black, thick] (0.4, 2.25) -- (1.6, 2.75);
            \draw[black, thick] (-2.4, 3.25) -- (-3.6, 3.75);
            \draw[black, thick] (-1.6, 3.25) -- (-0.4, 3.75);
            \draw[black, thick] (1.6, 3.25) -- (0.4, 3.75);
            \draw[black, thick] (-3.6, 4.25) -- (-2.4, 4.75);
            \draw[black, thick] (-0.4, 4.25) -- (-1.6, 4.75);
            \draw[black, thick] (0.4, 4.25) -- (1.6, 4.75);
            \draw[black, thick] (-1.6, 3.25+2) -- (-0.4, 3.75+2);
            \draw[black, thick] (-1.6, 3.25+4) -- (-0.4, 3.75+4);
            \draw[black, thick] (0.4, 2.25+4) -- (1.6, 2.75+4);
            \draw[black, thick] (-0.4, 2.25+4) -- (-1.6, 2.75+4);
            \draw[black, thick] (1.6, 3.25+2) -- (0.4, 3.75+2);
            \draw[black, thick] (1.6, 3.25+4) -- (0.4, 3.75+4);
            \draw[black, thick] (2.4, 5.25) -- (3.6, 5.75);
            \draw[black, thick] (3.6, 6.25) -- (2.4, 6.75);
            \draw[black, thick] (0, 8.25) -- (0, 8.75);
            \draw[black, thick] (0, 9.25) -- (0, 9.75);
        \end{tikzpicture}
        }
        \caption{The lattice $J(\mathrm{SS}_4)$ with Atniss wins unshaded and colored \textcolor{brickred}{red} and Eeta wins shaded and colored \textcolor{royalblue}{blue}.} \label{J(ss4)}
\end{figure}

We now prove auxiliary lemmas towards \Cref{main2}.

\begin{lemma} \label{lemma1}
    For any binary string $s$ and $T \subseteq F(s)$, the meet of all order ideals of $\mathrm{SS}_n$ with binary representations $G(s, i)$ for all $i \in T$ is the order ideal with binary representation  $G(s,T)$.
\end{lemma}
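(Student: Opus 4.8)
The plan is to translate from binary strings to their vectors of partial sums, where both the order on $J(\mathrm{SS}_n)$ and the meet operation become transparent. For a length-$n$ binary string $s$, set $P_k(s) = \sum_{j=1}^{k} s_j$. First I would record the geometric meaning of $P_k$: reading $s$ as the sequence of up-steps (the $1$s) and down-steps (the $0$s) of the path lying above an order ideal, the height of that path after $k$ steps is an affine function of $P_k(s)$. Hence for $u,v \in J(\mathrm{SS}_n)$ one has $u \le v$ (inclusion of order ideals) exactly when $P_k(u) \le P_k(v)$ for every $k$, the larger ideal being the one whose path lies weakly higher everywhere. Since $J(\mathrm{SS}_n)$ is a lattice of order ideals, the meet of a finite family is its intersection, realized by the pointwise-lowest path; concretely, the meet satisfies $P_k\bigl(\bigwedge_i v_i\bigr) = \min_i P_k(v_i)$ for all $k$. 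The one point to verify here is that the coordinatewise minimum of several valid partial-sum vectors is again valid, i.e.\ nondecreasing with unit increments; this follows from a short monotonicity-and-parity argument and guarantees that the minimum is genuinely realized by an order ideal.

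Next I would compute the effect of a single cover move on partial sums. If $i \in F(s)$ with $i < |s|$, then $G(s,i)$ swaps $s_i s_{i+1} = 10$ to $01$, which leaves every $P_k$ unchanged except $P_i$, which drops by $1$; if $i = |s|$, then $G(s,i)$ turns the trailing $1$ into a $0$, again lowering only $P_{|s|}$ by $1$. In both cases $P_k(G(s,i)) = P_k(s) - [k = i]$. Substituting into the meet formula, the meet $w$ of all $G(s,i)$ with $i \in T$ satisfies $P_k(w) = \min_{i \in T}\bigl(P_k(s) - [k=i]\bigr) = P_k(s) - [k \in T]$.

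It then remains to show that $G(s,T)$ has these same partial sums. The key structural observation is that $T \subseteq F(s)$ contains no two consecutive integers: if $i$ and $i+1$ both lay in $F(s)$, then unwinding the defining conditions for membership in $F(s)$ would force $s_{i+1}$ to equal both $0$ and $1$. Consequently the replacements constituting $G(s,T)$ act on pairwise disjoint positions and do not interfere, so performing all of them simultaneously alters each $P_k$ exactly as the individual moves do; a direct count of the $+1$ and $-1$ contributions (treating the trailing-$1$ case separately) gives $P_k(G(s,T)) = P_k(s) - [k \in T]$. Since $w$ and $G(s,T)$ have identical partial-sum vectors and the partial-sum map is injective, they are the same order ideal, which is the claim.

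I expect the main obstacle to be the bookkeeping at the two boundary-sensitive points: confirming that coordinatewise minima of partial-sum vectors remain valid (so the formal minimum really is an order ideal and hence the meet), and handling the trailing-$1$ case $i=|s|$ uniformly with the interior swaps in both the single-move and the simultaneous-move computations. Everything else is routine once the non-consecutiveness of $T$ is established.
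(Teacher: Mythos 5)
Your proof is correct, but it takes a genuinely different route from the paper's. The paper argues directly at the level of lattice points: for each $i \in F(s)$ it identifies the point $(x_i, y_i)$ of $v$ lying directly below the $i^{\text{th}}$ step of the path, observes that $G(s,i)$ is the binary representation of $v \setminus \{(x_i,y_i)\}$, and then invokes the fact that meets in a lattice of order ideals are intersections, so the meet is $v \setminus \{(x_i,y_i) \mid i \in T\}$, whose representation is $G(s,T)$ --- essentially a one-line set-theoretic identity. You instead pass to partial-sum coordinates $P_k$, show the meet is the coordinatewise minimum, compute that a single move lowers exactly coordinate $i$ by one, and conclude via your observation that $F(s)$ contains no two consecutive indices, so the substitutions making up $G(s,T)$ act on disjoint position pairs. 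Both arguments ultimately rest on meet-equals-intersection; the paper's encoding makes the conclusion immediate, while yours is more computational but self-contained in the string language and yields the reusable formula $P_k(G(s,T)) = P_k(s) - 1$ for $k \in T$ and $P_k(G(s,T)) = P_k(s)$ otherwise, without needing to identify the removed lattice points. One small simplification to your deferred step: in the partial-sum encoding no parity argument is needed to see that the coordinatewise minimum is valid. Since every length-$n$ binary string represents an order ideal of $\mathrm{SS}_n$, validity just means increments lie in $\{0,1\}$, and the minimum $h(k) = \min_i P_k(v_i)$ satisfies $h(k) \le h(k+1) \le h(k) + 1$ directly (compare $h(k+1)$ with $P_{k+1}$ of an index minimizing $P_k$); parity only becomes relevant if you encode paths by $\pm 1$ height steps rather than partial sums.
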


\begin{proof}
    Let $s$ be the binary representation of an order ideal $v \in J(\mathrm{SS}_n)$. For $i \in F(s)$, let $(x_i, y_i) \in v$ be the lattice point directly below the $i^{\text{th}}$ step of the path in $J(\mathrm{SS}_n)$ with binary representation $s$. In the bijection between the order ideals of $\mathrm{SS}_n$ and binary strings, $G(s,i)$ is the binary representation of $v \setminus \{(x_i, y_i)\}$. For $T \subseteq F(s)$, it holds that $$\wedge \{v \setminus \{(x_i,y_i)\} \mid i \in T\}= \cap \{v \setminus \{(x_i,y_i)\} \mid i \in T\},$$ which has binary representation $G(s,T)$.
    
\end{proof}

Now, we prove $G(s,I(s))$ is good for some $I(s) \subseteq F(s)$ for any binary string $s$.

\begin{lemma}\label{lemma2}
    For any binary string $s$, there exists some $I(s) \subseteq F(s)$ such that $G(s,I(s))$ is good. 
\end{lemma}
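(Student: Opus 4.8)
The plan is to induct on the number $k$ of maximal blocks of $1$'s in $s$. Write $s = 0^{c_0} 1^{a_1} 0^{c_1} 1^{a_2} \cdots 1^{a_k} 0^{c_k}$ with $a_i \ge 1$, $c_0, c_k \ge 0$, and $c_1, \ldots, c_{k-1} \ge 1$. I first record two structural facts. By the definition of $F$, the set $F(s)$ contains exactly one index per $1$-block, namely the position of the rightmost $1$ of that block (an interior block ends in a $10$, and the final block, if $c_k = 0$, ends at position $|s|$). Consequently, by \Cref{lemma1}, for $A \subseteq F(s)$ the string $G(s,A)$ is obtained by simultaneously moving the rightmost $1$ of each selected block one step to the right, into the following $0$, or off the end when that block is the suffix $1^{a_k}$ with $c_k = 0$. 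Call an adjacent $1$-block/$0$-block pair \emph{safe} if the two blocks are not both of odd length, so that $s$ is good precisely when every such pair is safe.

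For the base cases $k \le 1$ I would argue directly. If $k = 0$ then $s$ is all $0$'s and good. If $k = 1$ and $(1^{a_1}, 0^{c_1})$ is already safe (in particular if $c_1 = 0$), take $I(s) = \emptyset$; if it is bad, so $a_1$ and $c_1$ are both odd, then shifting the single block replaces $1^{a_1} 0^{c_1}$ by $1^{a_1-1} 0 1 0^{c_1-1}$ (which reads $1^{a_1-1} 0 1$ when $c_1 = 1$), and since $a_1 - 1$ and $c_1 - 1$ are even every adjacent pair here is safe, so the result is good. In the inductive step ($k \ge 2$) I peel off the leftmost block and decide its shift from the pair $(1^{a_1}, 0^{c_1})$. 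If the pair is safe, leave the block fixed and apply the inductive hypothesis to the suffix $1^{a_2} 0^{c_2} \cdots 1^{a_k} 0^{c_k}$, which has $k-1$ blocks. If the pair is bad and $c_1 \ge 2$, shift the block: locally $1^{a_1} 0^{c_1}$ becomes $1^{a_1-1} 0 1 0^{c_1-1}$, whose internal pairs are safe because $a_1$ and $c_1$ are odd, and recurse on the unchanged suffix $1^{a_2} \cdots$. If the pair is bad and $c_1 = 1$, shifting the leftmost block merges its moved $1$ onto the left of the second block; here I recurse on the modified suffix $1^{a_2+1} 0^{c_2} \cdots$, which again has only $k-1$ blocks.

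In each case the retained prefix is good and ends in a block of $0$'s (or is empty/all-zero): this is clear when the block is left fixed, holds in the $c_1 \ge 2$ case because $a_1-1$ and $c_1-1$ are even, and holds in the merge case because $1^{a_1-1}$ has even length so $(1^{a_1-1},0^1)$ is safe. Hence the junction between the prefix and the suffix is always a $0$-block followed by a $1$-block, which can never be a bad pair; combined with the goodness of the suffix furnished by induction, the whole string $G(s, I(s))$ is good. It remains to lift the shift indices returned for the suffix back to $F(s)$. This is immediate in the two non-merging cases, and in the merging case one checks that prepending a $1$ to the second block does not move the rightmost $1$ of any block, so the rightmost-$1$ positions of blocks $2, \ldots, k$ are the same in $s$ and in the modified suffix, and $F$ of the modified suffix embeds into $F(s)$ as required.

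The hard part will be the merging case $c_1 = 1$: the shift of the leftmost block is forced (its pair is bad) yet is not a local operation, since it couples the first two blocks, so one cannot simply fix each bad pair independently. The resolution — recursing on the string in which the second block has been lengthened by one — is exactly what makes the induction close, and the bookkeeping needed to verify that the produced shift indices genuinely lie in $F(s)$ and that no new bad pair appears at the merge site is the step I expect to require the most care.
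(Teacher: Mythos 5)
Your inductive step contains a genuine gap, and it sits exactly in the claim you use to glue the pieces together: ``the junction between the prefix and the suffix is always a $0$-block followed by a $1$-block, which can never be a bad pair.'' This is false whenever the suffix's leading $1$-block is a singleton ($a_2=1$) and the recursive call chooses to shift it: then $G$ of the suffix \emph{begins with a $0$}, which merges into the trailing $0$-block of your frozen prefix and flips its parity, retroactively invalidating the safety check you made at block $1$. Concretely, take $s = 10010$ (so $a_1=1$, $c_1=2$, $a_2=1$, $c_2=1$). The pair $(1,00)$ is safe, so your Case A keeps block $1$ and recurses on the suffix $10$, which is bad and gets shifted; your output is $I(s)=\{4\}$ and $G(s,\{4\}) = 10001$, which contains the odd--odd pair $(1,000)$ and is bad. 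Here the only good repair is $I=\{1,4\}$, giving $01001$: block $1$ \emph{must} be shifted even though its pair looked safe. The opposite failure also occurs: for $s=100010$ your bad-pair rule (pair $(1,000)$ bad, $c_1\geq 2$) shifts block $1$, the suffix $10$ again gets shifted, and you output $G(s,\{1,5\})=010001$, which is bad, whereas the good choice is $I=\{5\}$, i.e.\ leave block $1$ alone, because shifting the singleton block $2$ lengthens $0^{c_1}$ to even length. So the correct decision at block $j$ genuinely depends on whether block $j+1$ is a singleton that will be shifted; no purely local left-to-right rule of the kind you propose can work. You correctly anticipated the rightward coupling in your merge case ($c_1=1$, where the shifted $1$ lengthens the next $1$-block), but a shift also propagates \emph{leftward} when the shifted block is a singleton, and a left-to-right scan has already committed on that side.

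This is precisely why the paper's proof of \Cref{lemma2} recurses from the right end rather than the left. There, $I(s)$ is built from $I$ of a (possibly \emph{modified}) prefix, with invariants ($|s|\notin I(s)$; $|s|-1\notin I(s)$ if $s$ ends in $00$; $I(s)=I(s_{1:|s|-1})$ if $s_{|s|}=1$) guaranteeing the boundary symbols are never disturbed; crucially, in the case $s_{|s|-2:|s|}=010$ with $G(s_{1:|s|-1},I(s_{1:|s|-1}))0$ bad, the recursion is applied to the modified prefix $s_{1:|s|-2}0$, which absorbs exactly the leftward side effect of shifting a singleton block (the preceding $0$-block grows by one) \emph{before} any decision about earlier blocks is made; the $s_{|s|-2:|s|}=110$ case similarly exploits parity of the rightmost $1$-block of the already-resolved prefix. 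To salvage your induction you would need a strengthened hypothesis that reports, say, whether the repaired suffix begins with $0$ and lets you revise the block-$1$ decision accordingly --- but then shifting block $1$ into a single $0$ ($c_1=1$) perturbs the already-resolved suffix in turn, and chasing that dependency essentially reconstructs the paper's right-to-left recursion.
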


\begin{proof} 
    We recursively construct $I(s)$ such that 
    \begin{itemize}
        \item $|s| \not \in I(s)$, 
        \item $|s|-1 \not \in I(s)$ if $s_{|s|-1:|s|} = 00$, and
        \item $I(s) = I(s_{1:|s|-1})$ if $s_{|s|} = 1$. 
    \end{itemize}
    When $|s| \leq 2$, let $I(s) =  \{1\}$ if and only if $s=10$; otherwise, let $I(s):= \emptyset$. Now, $|s| \geq 3$. First, suppose that $s_{|s|}=1$. By construction, $|s|-1 \not \in I(s_{1:|s|-1})$. By the definition of good, $G(s_{1:|s|-1},I(s_{1:|s|-1}))1$ is good. Thus, $I(s) := I(s_{1:|s|-1})$ suffices.
    
    Next, suppose that $s_{|s|-1:|s|}=00$. By construction, $|s|-2 \not \in I(s_{1:|s|-2})$. By the definition of good, $G(s_{1:|s|-2},I(s_{1:|s|-2}))00$ is also good. Thus, $I(s) := I(s_{1:|s|-2})$ suffices.
        
    Now, suppose that $s_{|s|-1:|s|}=10$ and that $G(s_{1:|s|-1},I(s_{1:|s|-1}))0$ is good. By construction, $|s|-1 \not \in I(s_{1:|s|-1})$. Thus, $I(s) := I(s_{1:|s|-1})$ suffices.
    
    Next, suppose that $s_{|s|-2:|s|}=010$ and that $G(s_{1:|s|-1},I(s_{1:|s|-1}))0$ is bad. By construction, $|s|-2 \not \in I(s_{1:|s|-2}0)$. By construction and the definition of good, $G(s_{1:|s|-2}0, I(s_{1:|s|-2}0))1$ is good. Thus, $I(s) := I(s_{1:|s|-2}0) \cup \{|s|-1\}$ suffices.

    Lastly, suppose that $s_{|s|-2:|s|}=110$ and that $G(s_{1:|s|-1},I(s_{1:|s|-1}))0$ is bad. By construction, $I(s_{1:|s|-1}) = I(s_{1:|s|-2})$. Then $$G(s_{1:|s|-1}, I(s_{1:|s|-1}))_{1:|s|-2}=G(s_{1:|s|-2}, I(s_{1:|s|-2})).$$ Therefore, the rightmost $1$-block of $G(s_{1:|s|-2},I(s_{1:|s|-2}))$ has even length. By the definition of good, $G(s_{1:|s|-2}0, I(s_{1:|s|-2}))1$ is good. Thus, $I(s) := I(s_{1:|s|-2}) \cup \{|s|-1\}$ suffices.
    \end{proof}

We now prove \Cref{main2} using Lemmas \ref{lemma1} and \ref{lemma2}. That is, we prove that $v \in \mathbf{E}(J(\mathrm{SS}_n))$ if and only if the binary representation $s$ of $v$ satisfies $s_{|s|}=0$ and $s_{1:|s|-1}$ is good.

\begin{proof} [Proof of \Cref{main2}]
    Fix $n$. We induct on the rank $r$. The statement holds when $r=0$. Now, suppose that $r\geq 1$ and that the statement holds for $\leq r-1$. Let $v \in J(\mathrm{SS}_n)$ of rank $r$ with binary representation $s$. First, suppose that $s_{1:|s|-1}$ is good and that $s_{|s|}=1$. Let $u \in J(\mathrm{SS}_n)$ have binary representation $s_{1:|s|-1}0$. Since $u \lessdot v$ and $u \in \mathbf{E}(J(\mathrm{SS}_n))$ by the induction hypothesis, $v \in \mathbf{A}(J(\mathrm{SS}_n))$. 

    Next, suppose that $s_{1:|s|-1}$ is good and that $s_{|s|}=0$. Let $u \in \mathrm{Ung}(v)\setminus\{v\}$ have binary representation $t$. Then $t = G(s,T)$ for some nonempty $T \subseteq F(s)$ by \Cref{lemma1}. Now, if $|s|-1 \in T$, then $t_{|s|}=1$. Thus, $u \in \mathbf{A}(J(\mathrm{SS}_n))$ by the induction hypothesis. Next, suppose that $|s|-1 \not \in T$. Now, if the $0-$block containing $t_{\mathrm{max}(T)+1}$ has even length, then $H(t_{1:|s|-1}) = \mathrm{max}(T)+1$. If not, then the $1-$block containing $t_{\mathrm{max}(T)}$ in $t$ has even length, because $t_{1: |s|-1}$ is good, and so, $H(t_{1:|s|-1}) = \mathrm{max}(T)-1$. Thus, $t_{1:|s|-1}$ is bad, and $u \in \mathbf{A}(J(\mathrm{SS}_n))$ by the induction hypothesis. Therefore, $\mathrm{Ung}(v)\setminus\{v\} \subseteq \mathbf{A}(J(\mathrm{SS}_n))$, and $v \in \mathbf{E}(J(\mathrm{SS}_n))$.

    Finally, suppose that $s_{1:|s|-1}$ is bad. Then, $G(s_{1:|s|-1}, I(s_{1:|s|-1}))$ is good for some $I(s_{1:|s|-1})$ by \Cref{lemma2}. By \Cref{lemma1}, the meet of the order ideals of $\mathrm{SS}_n$ with binary representations $s_{1:|s|-1}0$ and $G(s, i)$ for all $i \in I(s_{1:|s|-1})$ has binary representation $t$ such that $t_{|s|}=0$ and $t_{1:|s|-1}$ is good. The order ideal with binary representation $t$ is in $\mathbf{E}(J(\mathrm{SS}_n))$ by the induction hypothesis. Thus, $v \in \mathbf{A}(J(\mathrm{SS}_n))$.
\end{proof}

\Cref{corollary2} immediately follows from \Cref{main2}.

\subsection*{Acknowledgment}
This project is a part of the PRIMES-USA program at the Department of Mathematics at MIT. The authors are grateful to Tanya Khovanova, Pavel Etingof, and Slava Gerovitch.

\end{document}